\documentclass[reqno]{amsart}
\usepackage{amsmath, amssymb, amsthm, epsfig}
\usepackage{hyperref, latexsym}
\usepackage{url}
\usepackage[mathscr]{euscript}

\usepackage{color}
\usepackage{fullpage} 
\usepackage{setspace}

\onehalfspacing

\def\today{\ifcase\month\or
	January\or February\or March\or April\or May\or June\or
	July\or August\or September\or October\or November\or December\fi
	\space\number\day, \number\year}

\newtheorem{theorem}{Theorem}

\newtheorem{proposition}[theorem]{Proposition}

\theoremstyle{definition}

\theoremstyle{remark}

\newcommand{\C}{\mathbb{C}}
\newcommand{\R}{\mathbb{R}}

\newcommand{\Z}{\mathbb{Z}}

\newcommand{\hh}{\tfrac12}

\newcommand{\im}{{\rm Im}\,}
\newcommand{\re}{{\rm Re}\,}

\setcounter{page}{1}

\begin{document}
	\title[A note on the zeros of approximations of\\ the Ramanujan $\Xi-$function]{A note on the zeros of approximations of \\the Ramanujan $\Xi-$function}
	\author[Chirre]{Andr\'{e}s Chirre and Oswaldo Vel\'asquez Casta\~n\'on}
	\subjclass[2010]{11M26, 30D10}
	\keywords{Ramanujan zeta function, Riemann hypothesis, Zeros of approximations of the Ramanujan $\Xi$--function, Distribution of zeros of entire functions}
	
	\address{Department of  Mathematical Sciences, Norwegian University of Science and Technology, NO-7491 Trondheim, Norway}
	\email{carlos.a.c.chavez@ntnu.no }
	\address{IMCA - Instituto de Matem\'aticas y Ciencias Afines, Lima, Per\'u}	
	\email{oswaldo@imca.edu.pe}


	\allowdisplaybreaks
	\numberwithin{equation}{section}
	
	\maketitle
	
	\begin{abstract} In this paper, we review the study of the distribution of the zeros of certain approximations for the Ramanujan $\Xi-$function given by H. Ki \cite{haseoki}, and we provide a new proof of his results. Our approach is motivated by the ideas of Vel\'asquez \cite{ov} in the study of the zeros of certain sums of entire functions with some condition of stability related to the Hermite-Biehler theorem.

	\end{abstract}

		
\section{Introduction}

\subsection{Background} Let $\tau(n)$ be the Ramanujan's tau-function, defined by 
	\[
	\Delta(z)=\displaystyle\sum_{n=1}^{\infty}\tau(n)q^{n}=z\prod_{n=1}^\infty(1-q^n)^{24},
	\]
	where $q=e^{2\pi iz}$, and $\im{z}>0$. It is well known that $\Delta(z)$ spans the space of cusp forms of dimension $-12$ associated with the unimodular group. The associated Dirichlet series and Euler product for $\Delta(z)$ is given by
	\[
	L(s)=\displaystyle\sum_{n=1}^{\infty}\frac{\tau(n)}{n^{s}}=\prod_{p}\big(1-\tau(p)p^{-s}+p^{11-2s}\big)^{-1},
	\]
	where the series and the product are absolutely convergent for $\re{s}>13/2$. 
	Let us define the Ramanujan $\Xi$--function, denoted by 	$\Xi_R(s)$, as follows
	\[
	\Xi_R(s)=(2\pi)^{is-6}L(-is+6)\Gamma(-is+6), 
	\]
	where $\Gamma(s)$ is the Gamma function. Another representation for $\Xi_R(s)$ is given by
	\[
	\Xi_R(s)=\displaystyle\int_{-\infty}^{\infty}\phi(t)e^{ist}\,dt,
	\]
	where
	\begin{align}  \label{20_58_25_01}
	\phi(t)=e^{-2\pi\cosh(t)}\displaystyle\prod_{k=1}^{\infty}\big[\big(1-e^{-2\pi ke^{t}}\big)\big(1-e^{-2\pi ke^{-t}}\big)\big]^{12}.
	\end{align}
In \cite{Hardy}, Hardy highlighted the importance of the location of the zeros of $\Xi_R(s)$ in the strip $|\im(s)|\leq \hh$. The Riemann hypothesis for the Ramanujan zeta function states that all zeros of $\Xi_R(s)$ are real. 

\subsection{Zeros of the approximations $\Xi_F(s)$}
The purpose of this paper is to study the distribution of the zeros of certain approximations for the Ramanujan $\Xi$--function. Inspired in the representation \eqref{20_58_25_01}, Ki \cite{haseoki} defined these approximations as follows: Let $F$ be a finite sequence of complex numbers $a_0, a_1, ..., a_n$ such that at least one of them is different from zero. We define the function
	\[
	\Xi_F(s)= \displaystyle\int_{-\infty}^{\infty}\phi _{F}(t)e^{ist}\, dt,
	\]
	where
	\[
	\phi_F(t)=e^{-2\pi\cosh{t}}\Bigg(\displaystyle\sum_{m=0}^{n}a_me^{-2\pi me^{t}}\Bigg)\Bigg(\displaystyle\sum_{m=0}^{n}\overline{a_m}e^{-2\pi me^{-t}}\Bigg).
	\]
We recall that $\overline{\Xi_F(\overline{s})}=\Xi_F(s)$, and one can see that for some sequences $F_k$, the function $\Xi_{F_k}(s)$ converges uniformly to $\Xi_R(s)$ on all compact subsets of $\C$. 

\smallskip

Throughout this paper, we will study the distribution of the zeros of the function $C_F(s):=\Xi_F(-is)$. Note that the zeros of $C_F(s)$ are symmetric respect to the line $\re{s}=0$. Using the argument principle, Ki \cite[Theorem 1]{haseoki} established  for $T\geq 2$ that\footnote{\,\,\, Throughout the paper we use the Vinogradov's notation $f=O(g)$ (or $f\ll g$) to mean that $|f(t)|\leq C|g(t)|$ for a certain constant $C>0$ and $t$ sufficiently large.}
\begin{align*} 
	N(T,C_F)=\frac{T}{\pi}\log\frac{T}{e\pi}+O(\log{T}),
	\end{align*}
where $N(T,C_F)$ stands for the number of zeros of $C_F(s)$ such that $1\leq \im{s} < T$, counting multiplicity. In the lower half-plane a similar result holds. Moreover, using the method developed by Levinson \cite{levinson}, he stated that
	\begin{align} \label{resultki}
\overline{N}(T, C_F)-\overline{N_1}(T,C_F)=O(T),
	\end{align}
where $\overline{N}(T, C_F)$ stands for the number of zeros of $C_F(s)$ such that $|\im{s}| < T$, counting multiplicity and $\overline{N_1}(T,C_F)$ denotes the number of simple zeros such that $|\im{s}|<T$ and $\re{s}=0$. In a sense, it means that almost all zeros of $C_F(s)$ lie on the line $\re{s}=0$ and are simple. Our first goal is to establish a refinement of \eqref{resultki}.

\begin{theorem} \label{16_17_3_20} For $T
\geq 2$ we have \begin{align*}
	0\leq \overline{N}(T, C_F)-\overline{N_1}(T,C_F)\leq \bigg(32n+\dfrac{32\ln(2n+1)}{\pi}\bigg)T +O(1).
	\end{align*}	
\end{theorem}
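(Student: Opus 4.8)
The plan is to transfer the problem to the entire function $\Xi_F(z)=\int_{-\infty}^\infty\phi_F(t)e^{izt}\,\dt$, which is real on $\R$, so its zeros are symmetric about $\R$. Under $s\mapsto -iz$ (i.e.\ $C_F(s)=\Xi_F(-is)$) the line $\re s=0$ becomes $\im z=0$ and the strip $|\im s|<T$ becomes $|\re z|<T$; hence $\overline{N}(T,C_F)$ is the number of zeros $z$ of $\Xi_F$ with $|\re z|<T$ and $\overline{N_1}(T,C_F)$ is the number of simple real zeros with $|z|<T$. The inequality $0\le\overline{N}-\overline{N_1}$ is then immediate, since a simple real zero with $|z|<T$ is in particular a zero with $|\re z|<T$; and $\overline{N}-\overline{N_1}$ equals the number of non-real zeros with $|\re z|<T$ plus the total multiplicity of the multiple real zeros with $|z|<T$. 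This is the quantity to be bounded by $\big(32n+\tfrac{32\ln(2n+1)}{\pi}\big)T+O(1)$.

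The structural input is the factorisation $\phi_F(t)=g(t)\,\overline{g(-t)}$, where $g(t)=\sum_{m=0}^{n}a_m e^{-\pi(2m+1)e^{t}}$ (using $e^{-2\pi\cosh t}=e^{-\pi e^{t}}e^{-\pi e^{-t}}$). Expanding, $\Xi_F$ is a finite $\C$-linear combination of the elementary kernels
\[
\kappa_{\mu,\nu}(z):=\int_{-\infty}^\infty e^{-\pi\mu e^{t}-\pi\nu e^{-t}}e^{izt}\,\dt=\Big(\tfrac{\mu}{\nu}\Big)^{-iz/2}\,2K_{iz}\!\big(2\pi\sqrt{\mu\nu}\big),\qquad \mu,\nu\in\{1,3,\dots,2n+1\},
\]
with $K$ the Macdonald function. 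Each $\kappa_{\mu,\nu}$ is real entire and has only real (simple) zeros: this is the case $n=0$ of the theorem, for which the stated bound is exactly $0$, and is a classical Fourier-analytic fact about the weight $e^{-2\pi r\cosh u}$; equivalently $2K_{iz}(2\pi r)$ lies in the Laguerre–Pólya class. Thus, apart from the nonvanishing exponential factors $(\mu/\nu)^{-iz/2}$, whose frequencies $\tfrac12\log(\mu/\nu)$ all lie in $\big[-\tfrac12\log(2n+1),\tfrac12\log(2n+1)\big]$, the function $\Xi_F$ is a sum of finitely many entire functions each of Hermite–Biehler/Laguerre–Pólya type — exactly the setting of \cite{ov}.

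Next I would split $\Xi_F=\Psi+\Psi^*$ with $\Psi(z)=\int_0^\infty\phi_F(t)e^{izt}\,\dt$ (entire, by the double-exponential decay of $\phi_F$ at $+\infty$) and $\Psi^*(z)=\overline{\Psi(\bar z)}=\int_{-\infty}^0\phi_F(t)e^{izt}\,\dt$, and run a zero-counting estimate in the spirit of \cite{ov}: for a finite sum $\sum_j c_j e^{i\lambda_j z}E_j(z)$ with each $E_j$ real entire of the above stable type, the number of zeros off $\R$ with $|\re z|<T$ is $\le\big(A\cdot\#\{\text{terms}\}+\tfrac{B}{\pi}(\max_j\lambda_j-\min_j\lambda_j)\big)T+O(1)$ for explicit absolute constants $A,B$. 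The proof of this estimate shows that once $|\im z|$ exceeds a fixed height one of $\Psi,\Psi^*$ dominates the other, confining the off-line zeros to a horizontal strip, and then counts them there by the argument principle, comparing the growth of $\arg\Xi_F$ along $\R$ — which is $\pi$ times the number of sign changes of $\Xi_F=2\,\re\Psi$ on $\R$, hence $\ge\tfrac1\pi\Delta\arg\Psi\big|_{[-T,T]}-O(1)$ — with Ki's total count $\tfrac T\pi\log\tfrac{T}{e\pi}+O(\log T)$. In our situation $\max_j\lambda_j-\min_j\lambda_j=\log(2n+1)$, and — this is where the $32n$ term comes from — one first regroups the $(n+1)^2$ products $a_a\overline{a_b}\,\kappa_{2a+1,2b+1}$ into $O(n)$ stable blocks (for instance the diagonal block $\sum_a|a_a|^2\kappa_{2a+1,2a+1}$, a positive combination of kernels with interlacing real zeros, plus $O(n)$ further blocks), the remaining slack being absorbed into $A,B$; this produces the two contributions $32n$ and $\tfrac{32\ln(2n+1)}{\pi}$.

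The step I expect to be the main obstacle is the domination/confinement estimate for $\Psi$ against $\Psi^*$, together with the bookkeeping that keeps the effective number of blocks linear (not quadratic) in $n$ and all constants uniform in $F$. This requires uniform asymptotics for $K_{iz}(2\pi\sqrt{\mu\nu})$ as $|\re z|\to\infty$ — a saddle-point/Stirling analysis, the weight peaking at $t=0$ so that $\Psi$ and $\Psi^*$ each capture essentially half of it — and a careful tracking of how the data $a_0,\dots,a_n$, equivalently the $n$ zeros of $\sum a_m x^m$ and the frequency span $\log(2n+1)$ of the Dirichlet polynomial $\sum a_m(2m+1)^{-w}$, perturb the phase $\arg\Psi$ and hence the location of the near-line zeros. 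Extracting the honest constant $32$, rather than merely an $O_n(1)$, is precisely the point at which one must be economical in these estimates.
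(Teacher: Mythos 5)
Your reduction of the statement to counting non-real zeros (plus multiple real zeros) of $\Xi_F$ in $|\re z|<T$ is fine, and your instinct that the frequency span $\log(2n+1)$ of the associated Dirichlet polynomial should govern the linear term matches the actual mechanism in the paper. But the proposal has a genuine gap: the entire quantitative content of the theorem is delegated to an unproved counting lemma and an unperformed regrouping. The lemma you invoke --- that for a finite sum $\sum_j c_j e^{i\lambda_j z}E_j(z)$ with each $E_j$ real entire of ``stable'' type the number of off-line zeros in $|\re z|<T$ is at most $\big(A\cdot\#\{\text{terms}\}+\tfrac{B}{\pi}(\max_j\lambda_j-\min_j\lambda_j)\big)T+O(1)$ with absolute $A,B$ --- is not the statement of \cite[Theorem 36]{ov} and is nowhere established; \cite[Theorem 36]{ov} controls off-line zeros of $h(s)+h^*(-s)$ by the number of zeros of the single function $h$ in a half-strip, under growth hypotheses on $h^*(-s)/h(s)$ that must themselves be verified (in the paper this is done via Ki's estimates for $W_F$, i.e.\ \eqref{16_14_3_20} and \eqref{16_14_3_21}, and the quasi-periodic structure of the zeros of $\psi_{F,k}$ to choose admissible heights $T_m$). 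Your proposal never supplies these domination estimates for $\Psi=\int_0^\infty\phi_F e^{izt}\,\dt$ versus $\Psi^*$; you explicitly flag them as ``the main obstacle,'' which is to concede the proof.

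Moreover, the step that is supposed to produce the coefficient $32n$ --- regrouping the $(n+1)^2$ kernels $a_a\overline{a_b}\,\kappa_{2a+1,2b+1}$ into $O(n)$ ``stable blocks'' --- is both uncarried-out and doubtful as stated: a nonnegative combination such as $\sum_a|a_a|^2\,2K_{iz}\big(2\pi(2a+1)\big)$ has only real zeros only if the constituent zero sets interlace suitably, and there is no reason the zeros of $K_{iz}(2\pi(2a+1))$ for distinct $a$ interlace; reality of zeros is not preserved under positive linear combination in general. Finally, the explicit constant $32$, which is exactly what distinguishes the theorem from Ki's $O(T)$ bound \eqref{resultki}, is never extracted. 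For comparison, the paper gets it by combining three concrete ingredients: the factor $4$ from \cite[Theorem 36]{ov}, a factor $2$ from $\varphi(2T)=2T+2$ together with the Dirichlet-polynomial zero count $N(-c,c,T_1,T_2,\psi_{F,k})\le n+\tfrac{\ln(2n+1)}{2\pi}(T_2-T_1)$ transferred to $h$ by Rouch\'e via $h(s)=\Gamma(s-k)\big(b_k\psi_{F,k}(s)+O(|s|^{-1/2})\big)$, and a final factor $2$ from Ki's simple-zero inequality \eqref{17_29_5_21}. None of these three steps has a counterpart actually executed in your outline, so as it stands the proposal is a programme rather than a proof.
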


\medskip

On the other hand, Ki \cite[Theorem 2]{haseoki} a result about the vertical distribution of the zeros of $C_F(s)$, based on the zeros of the function $\psi_F(s)$, defined by   		
\begin{align} \label{17_36_2_20}
\psi_F(s)=\pi^{-s}\displaystyle\sum_{m=0}^{n}a_m(2m+1)^{-s}.
\end{align}
Let $k\geq0$ be an integer such that $P(1)=P'(1)=\cdot\cdot\cdot=P^{(k-1)}(1)=0$ and $P^{(k)}(1)\neq0$, where $P(y)=\sum_{m=0}^n{a_m}y^{m}$. 

\begin{theorem} \label{teo123}
	 Let $\Delta_{*}<\Delta_{**}$ be positive real numbers. Suppose that ${\psi_F}(s-k)$ has finitely many zeros in $-\Delta_{**}<\re{s}<\Delta_{*}$. Let $\delta$ be such that $0<\delta<\Delta_{*}$. Then all but finitely many zeros of ${C_F}(s)$ which lie in $|\re{s}|\leq \delta$ are on the line $\re{s}=0$. In particular, all but finitely many zeros of ${C_F}(s)$ are on the line  $\re{s}=0$, if ${\psi_F}(s-k)$ has finitely many zeros in $\re{s}>-\Delta_{**}$.
\end{theorem}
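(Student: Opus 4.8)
The plan is to write $C_F$ as a controlled perturbation of a sum of two ``mirror'' entire functions and to run a Hermite--Biehler-type dominance argument, in the spirit of \cite{ov}. First I would pass to a Bessel form: substituting $u=e^{t}$ in $C_F(s)=\int_{-\infty}^{\infty}\phi_F(t)e^{st}\,\dd t$ and using $\int_{0}^{\infty}u^{s-1}e^{-au-b/u}\,\dd u=2(b/a)^{s/2}K_{s}(2\sqrt{ab})$ gives
\[
C_F(s)=2\sum_{m,l=0}^{n}a_{m}\overline{a_{l}}\Big(\tfrac{2l+1}{2m+1}\Big)^{s/2}K_{s}\!\big(2\pi\sqrt{(2m+1)(2l+1)}\,\big).
\]
Then I would insert the elementary expansion $K_{s}(z)=\tfrac12\sum_{j\ge0}\frac{(-1)^{j}(z/2)^{2j}}{j!}\big[(z/2)^{-s}\Gamma(s-j)+(z/2)^{s}\Gamma(-s-j)\big]$, collect powers of $2m+1$ and $2l+1$, and use that $\sum_{l}\overline{a_{l}}(2l+1)^{j}=\sum_{l}a_{l}(2l+1)^{j}=0$ for $0\le j<k$ (these are the order-$k$ vanishing conditions of $y\mapsto yP(y^{2})$ at $y=1$), obtaining a convergent expansion
\[
C_F(s)=\sum_{j\ge k}\Big(d_{j}\,\Gamma(s-j)\,\psi_F(s-j)+\overline{d_{j}}\,\Gamma(-s-j)\,\psi_F^{*}(-s-j)\Big),\qquad
d_{j}=\tfrac{(-1)^{j}\pi^{j}}{j!}\sum_{l}\overline{a_{l}}(2l+1)^{j},
\]
where $\psi_F^{*}(s)=\pi^{-s}\sum_{m}\overline{a_{m}}(2m+1)^{-s}$ and $d_{k}\ne0$. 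Putting $G(s):=d_{k}\,\Gamma(s-k)\,\psi_F(s-k)$ and $G^{\#}(s):=\overline{G(-\overline s)}=\overline{d_{k}}\,\Gamma(-s-k)\,\psi_F^{*}(-s-k)$, this is $C_F(s)=G(s)+G^{\#}(s)+R(s)$ with $R(s)=\overline{R(-\overline s)}$ the tail $j\ge k+1$; the symmetry $\overline{C_F(\overline s)}=C_F(-s)$ holds termwise, and on $\re s=0$ one has $C_F(it)=2\re G(it)+R(it)$ with $R(it)$ real.

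The argument would then rest on two estimates. By Stirling, for $s=\sigma+it$ with $|t|\to\infty$ and $\sigma$ bounded, $|\Gamma(\sigma+it)|\asymp|t|^{\sigma-\frac12}e^{-\pi|t|/2}$, so $|\Gamma(-s-k)/\Gamma(s-k)|\asymp|t|^{-2\sigma}$ and each extra unit of $j$ in $R$ costs $\asymp|t|^{-1}$. The crucial second input is that, under the hypothesis, $|\psi_F(s-k)|$ stays bounded away from $0$ and $\infty$ on $\{|\re s|\le\delta\}$ outside a bounded set: $\psi_F$ is a Dirichlet polynomial, so by Bohr's theory its zeros lie in a fixed vertical strip and every locally uniform limit of its vertical translates is again a Dirichlet polynomial with zeros in that same strip; were $|\psi_F(\cdot-k)|$ not bounded below on $|\re s|\le\delta$ at arbitrarily large height, Hurwitz's theorem would produce such a limit vanishing in $|\re s|\le\delta$, forcing infinitely many zeros of $\psi_F(s-k)$ in $-\Delta_{**}<\re s<\Delta_{*}$ — impossible, since $\delta<\Delta_{*}<\Delta_{**}$ places the strip $|\re s|\le\delta$ inside the one in the hypothesis. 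Granting this, $|G^{\#}(s)/G(s)|\asymp|t|^{-2\re s}$ uniformly on $0\le\re s\le\delta$, whence $G+G^{\#}\ne0$ (and indeed $|R|<|G+G^{\#}|$) for $0<|\re s|\le\delta$, $|t|$ large, and the zeros of $G+G^{\#}$ in that slab lie on $\re s=0$, where they are exactly the sign changes of $\re G(it)$.

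To conclude I would run a Rouché/argument-principle count on rectangles $\mathcal{Q}=[-\delta,\delta]\times[T,T']$, with $T$ large and $T,T'$ chosen among the heights (a positive proportion of all heights) whose distance to the nearest zero of $t\mapsto\re G(it)$ is $\gg(T\log T)^{-1}$. Standard estimates (using $|G'|\asymp|G|\log T$ and splitting the horizontal edges according to the distance of $\re s$ to $0$ at scale $1/\log T$) give $|R|<|G+G^{\#}|$ on $\partial\mathcal{Q}$, so Rouché yields that $C_F$ and $G+G^{\#}$ have equally many zeros in $\mathcal{Q}$, namely the number of sign changes of $\re G(it)$ in $[T,T']$. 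On the other hand $C_F(it)=2\re G(it)+R(it)$ with $|R(it)|\ll|G(it)|/T$ while $|\re G(it)|$ rises to size $\asymp|G(it)|$ between consecutive sign changes, so $C_F(it)$ keeps all but $O(1)$ of them, giving at least that many minus $O(1)$ zeros of $C_F$ on $\re s=0$ in $[T,T']$. Comparing, $C_F$ has at most $O(1)$ zeros off $\re s=0$ in $\mathcal{Q}$ with the $O(1)$ absolute; letting $T'\to\infty$ (and likewise for $\im s\le-T$, the band $|\im s|\le T$ being finite) leaves only finitely many off-line zeros in $|\re s|\le\delta$. The final ``in particular'' follows by letting $\Delta_{*}\to\infty$ (legitimate under the stronger hypothesis) together with the fact that all zeros of $C_F$ lie in a bounded vertical strip.

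I expect the main obstacle to be precisely the lower bound for the Dirichlet polynomial $|\psi_F(s-k)|$ on $|\re s|\le\delta$: a priori such a polynomial can come arbitrarily close to $0$ at arbitrarily large height without vanishing, which would break the comparison between $R$ and $G+G^{\#}$. Turning the qualitative hypothesis — with the strict buffer $\delta<\Delta_{*}$ — into a genuine quantitative bound, via Bohr almost-periodicity and Hurwitz, is the technical heart of the proof; the surrounding zero-count is classical and runs parallel to \cite{levinson,haseoki}.
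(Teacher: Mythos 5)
Much of your setup is sound and, in fact, reconstructs the ingredients the paper quotes: your Bessel-function computation is a correct re-derivation of de Bruijn's expansion (with $d_k\neq 0$ equivalent to $P$ vanishing to order exactly $k$ at $1$), your tail estimate for $R$ matches \eqref{16_14_3_20}, and what you call the technical heart --- converting the hypothesis into a lower bound $|\psi_F(s-k)|\gg 1$ on $|\re{s}|\le\delta$ at large heights via almost periodicity and Hurwitz, using the buffer $\delta<\Delta_*$ --- is exactly the paper's reduction (quasi-periodicity of zeros of Dirichlet polynomials, which turns ``finitely many zeros'' into ``no zeros at large height'', plus Proposition \ref{19_4_3_20}(3)). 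So that part is fine.

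The genuine gap is the central counting step. From $|G^{\#}(s)/G(s)|\asymp |t|^{-2\sigma}$ with unspecified implied constants you cannot conclude that $G+G^{\#}\neq 0$ for $0<|\re{s}|\le\delta$ and $|t|$ large: the dominance $|G^{\#}/G|<1$ is only available for $\sigma\gg 1/\log|t|$, while on $\re{s}=0$ one has $|G^{\#}/G|\equiv 1$, so the boundary layer $0<\sigma\ll 1/\log|t|$ --- which is precisely where the whole difficulty of the theorem lives --- is untouched. Consequently the pivot of your Rouch\'e comparison, namely that the number of zeros of $G+G^{\#}$ in $[-\delta,\delta]\times[T,T']$ equals the number of sign changes of $\re{G(it)}$ up to a uniform $O(1)$, is asserted rather than proved; it is essentially the theorem itself for the main term, and nothing in your dominance estimates rules out infinitely many zeros of $G+G^{\#}$ (hence of $C_F$) drifting toward the line at distance $O(1/\log|t|)$ but staying off it. Closing this requires an actual argument-principle/winding computation around the rectangle: this is what Levinson's method supplies in Ki's proof, and what Theorem \ref{teoprin} (Vel\'asquez) packages in the paper, which writes $C_F(s)=h(s)+h^*(-s)$ with $h(s)=W_F(-is-i/2)$, checks conditions (i)--(ii) with exactly your kind of estimates, and then bounds the off-line-minus-on-line count by $4\widehat{N}(0,\delta,-\varphi(2T),\varphi(2T),h)+O(1)$, which is $O(1)$ here because $|h(s)|\gg|\Gamma(s-k)|$ in the strip. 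Unless you import such a mechanism (or prove an analogue of Theorem \ref{teoprin} yourself), the proposal does not yield the conclusion; the smaller issues --- zeros of $\re{G(it)}$ need not be sign changes, and your horizontal-edge bounds $|R|\ll|G|/T$ versus $|G+G^{\#}|\gg|G|/T$ are borderline in the constants --- are patchable, but the boundary-layer count is not, as written.
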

 
Ki included a second proof for the second part of Theorem \ref{teo123}. In particular, this second proof gave information about the simplicity of the zeros of $C_F(s)$. Anyway, Ki conjectured that second case for $\psi_F(s-k)$ is not possible. On the other hand, using \eqref{poly} is clear that ${\psi_F}(s-k)$ has the same set of zeros of a Dirichlet polynomial in the framework of \cite[Subsection 12.5]{bellman}. The set of zeros of a Dirichlet polynomial is quasi-periodic (see \cite[Appendix 6, p. 449]{levin}). Then, if $s_0=\sigma_0+i\tau_0$ is a zero of the Dirichlet polynomial, for any $\varepsilon>0$ we can construct a sequence $\{s_n=\sigma_n+i\tau_n\}_{n\in\mathbb{N}}$ of zeros, such that $\sigma_n\in]\sigma_0-\varepsilon, \sigma_0+\varepsilon[$ for all $n\in\mathbb{N}$ and $\tau_n\to \pm\infty$. This implies that each open vertical strip has no zeros or has infinite zeros. Therefore, the hypothesis in Theorem \ref{teo123} is reduced to: ${\psi_F}(s-k)$ has no zeros in $-\Delta_{**}<\re{s}<\Delta_{*}$. Our second goal in this paper is to give a new proof of this result.

\begin{theorem} \label{teo1}
	Let $\Delta_{*}<\Delta_{**}$ be positive real numbers. Suppose that ${\psi_F}(s-k)$ has no zeros in $-\Delta_{**}<\re{s}<\Delta_{*}$. Let $\delta$ be such that $0<\delta<\Delta_{*}$. Then all but finitely many zeros of ${C_F}(s)$ which lie in $|\re{s}|\leq \delta$ are on the line $\re{s}=0$ and are simple.
\end{theorem}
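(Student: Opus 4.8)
The plan is to realize $C_F(s)$ as a sum of two entire functions whose ratio is controlled by $\psi_F$, and then apply the Hermite--Biehler-type stability machinery of Vel\'asquez \cite{ov}. Starting from the integral representation of $\Xi_F$ and the Euler-type product in $\phi_F$, one writes $C_F(s)=\Xi_F(-is)=\int_{-\infty}^{\infty}\phi_F(t)e^{st}\,\dt$ and splits the contour at $t=0$ (or uses the substitution $t\mapsto -t$ on one half) to express $C_F(s)=A(s)+B(s)$, where $A$ and $B$ are each entire of the same order and, crucially, $B(s)=\overline{A(\overline{s})}$ because of the conjugate-symmetry of $\phi_F$. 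This is the analogue of the decomposition $\Xi=E+E^{*}$ used in the Hermite--Biehler framework: zeros off the critical line $\re s=0$ correspond exactly to points where $|A(s)|=|B(s)|$ with the ``wrong'' inequality, so it suffices to show $|A(s)|>|B(s)|$ for $\re s>0$ outside a bounded set (and the mirror statement for $\re s<0$), which forces all but finitely many zeros onto $\re s=0$; simplicity then follows because on the line the logarithmic derivative argument (monotonicity of the phase) gives $C_F'(s)\neq 0$ once $|A/B|$ is strictly monotone in $\re s$.

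Next I would extract the asymptotic behavior of $A(s)$ and $B(s)$ as $|s|\to\infty$ in the relevant half-planes. The dominant contribution to $\int_0^{\infty}\phi_F(t)e^{st}\,\dt$ comes, by a saddle-point / Watson-type analysis of $e^{-2\pi\cosh t}(\sum a_m e^{-2\pi m e^{t}})(\sum\overline{a_m}e^{-2\pi m e^{-t}})$, from the neighborhood of $t=0$ after accounting for the polynomial factors; the upshot should be that $A(s)$ behaves like a Gamma-type factor times $\psi_F(s-k)$ (up to the shift by $k$ coming from the vanishing $P(1)=\dots=P^{(k-1)}(1)=0$ and $P^{(k)}(1)\neq 0$), and similarly $B(s)$ relates to $\psi_F(-s-k)$ or its conjugate. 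This is the step that ties the problem to the Dirichlet polynomial $\psi_F$, and it is where the definition \eqref{17_36_2_20} with the shift $s\mapsto s-k$ enters. Once this asymptotic identification is in hand, the hypothesis that $\psi_F(s-k)$ has no zeros in $-\Delta_{**}<\re s<\Delta_{*}$ translates into a uniform lower bound $|\psi_F(s-k)|\gg_\varepsilon 1$ on $\re s\geq -\Delta_{**}+\varepsilon$ away from a bounded region (using the quasi-periodicity and the Dirichlet-polynomial structure recalled in the excerpt), and symmetrically an upper bound matching $|\psi_F(-s-k)|$.

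The decisive estimate is then the comparison: for $0<\re s\leq\delta<\Delta_{*}$ and $|\im s|$ large, I want $|A(s)|/|B(s)|>1$, with strict monotonicity in $\re s$. The Gamma-type prefactors contribute a factor whose modulus is $>1$ for $\re s>0$ and increasing in $\re s$ (a standard Stirling computation), while the Dirichlet-polynomial ratio $|\psi_F(s-k)|/|\psi_F(-s-k)|$ is bounded away from $0$ and $\infty$ by the no-zeros hypothesis; combining, the product exceeds $1$ once $|\im s|$ is large enough, and one gets the analogous reversed inequality for $\re s<0$. By the Hermite--Biehler/Vel\'asquez criterion this confines all but finitely many zeros in $|\re s|\leq\delta$ to the line $\re s=0$, and the strict monotonicity of $|A/B|$ across the line gives a nonvanishing derivative there, i.e. simplicity. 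I expect the main obstacle to be making the saddle-point asymptotics for $A(s)$ and $B(s)$ \emph{uniform} in the strip $|\re s|\leq\delta$ as $|\im s|\to\infty$ — controlling the error terms from the tails of the $t$-integral and from the truncation of the infinite products uniformly in $s$ — since everything downstream (the lower/upper bounds on $\psi_F$, the Stirling comparison, the monotonicity) is comparatively routine once the asymptotic expansion is pinned down with an explicit, uniform error.
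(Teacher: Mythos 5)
Your overall framework (write $C_F(s)$ as $h(s)+h^*(-s)$ and control the ratio via $\psi_F$ and Gamma asymptotics) is the same general Hermite--Biehler philosophy as the paper, but your decisive step does not work as stated. You propose to prove the pointwise inequality $|A(s)|>|B(s)|$ (equivalently $|F(s)|<1$ with $F=h^*(-s)/h(s)$) for all $0<\re s\le\delta$ outside a bounded set, and to deduce from it both the confinement of zeros to the line and their simplicity. The available estimates --- Ki's upper bound $|h^*(-s)|\ll|\Gamma(s-k-1)|\,|\tau|^{\mu(\sigma)}$ and the lower bound $|h(s)|\gg|\Gamma(s-k)|$ coming from the no-zeros hypothesis on $\psi_F(s-k)$ --- yield only $|F(s)|\ll|\tau|^{\mu(\sigma)-1}=|\tau|^{-\sigma}$ in $0\le\sigma\le\delta$. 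This bound degenerates as $\sigma\to0^+$: it gives $|F(s)|<1$ only for $|\tau|\gtrsim C^{1/\sigma}$, so no neighborhood of the line $\re s=0$ (minus a compact set) is covered, and indeed $|F|\equiv1$ on the line itself. Consequently your argument can only exclude zeros with $\varepsilon\le|\re s|\le\delta$ for each fixed $\varepsilon>0$; it cannot rule out infinitely many zeros drifting toward the line, which is exactly the hard case. The strict inequality in the full half-strip is a genuine stability statement that is not known here and is strictly stronger than the theorem; the paper avoids it by invoking Vel\'asquez's counting theorem (Theorem \ref{teoprin}), which requires $|F|<1$ only on the single vertical line $\re s=\delta$ (where $\mu(\delta)<1$ saves a power of $|\tau|$) plus a mild exponential bound on horizontal segments, and then reduces everything to showing that $h$ has only finitely many zeros in $0\le\re s\le\delta$, which follows from $|h(s)|\ge L|\Gamma(s-k)|$.

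Your simplicity argument has the same defect: ``strict monotonicity of $|A/B|$ across the line'' is not established (and would again need the uniform off-line inequality), whereas in the paper simplicity comes for free from the counting inequality, since $N(-\delta,\delta,-T,T,C_F)$ counts with multiplicity while $N_0'(T,C_F)$ counts distinct zeros on the line, so a bounded difference forces all but finitely many zeros to be on the line \emph{and} simple. Finally, note that your proposed asymptotics for the pieces obtained by splitting the integral at $t=0$ are not something you can wave through: the paper does not redo any saddle-point analysis but instead uses de Bruijn's weighted decomposition through $W_F$ (dividing $\phi_F$ by $e^{t/2}+e^{-t/2}$), precisely so that Ki's ready-made estimates \eqref{16_14_3_20} and \eqref{16_14_3_21} apply to $h$ and $h^*(-s)$. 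So the main missing idea is not uniformity of saddle-point expansions, as you suggest, but a mechanism (a zero-counting theorem of Vel\'asquez/Levinson type) to handle the region near $\re s=0$ where no pointwise comparison of $|h|$ and $|h^*(-\cdot)|$ is available.
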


We highlight that our proof includes information about the simplicity of the zeros for the first case. The key relation between the functions $C_F(s)$ and $\psi_F(s-k)$ is due by de Bruijn \cite[p. 225]{bruijn}, who showed that 
\[
C_F(s)= \displaystyle\sum_{m=k}^{\infty}b_m \psi_F(s-m)\Gamma(s-m) + \displaystyle\sum_{m=k}^{\infty}\overline{b_m\hspace{0.05cm}\psi_F(-s-m)}\Gamma(-s-m),
\]
where $b_m$ are complex numbers and $b_k\neq 0$. 

\subsection{Strategy outline} Our approach is motivated by a result of Vel\'asquez \cite[Theorem 36]{ov}, about the distribution of the zeros of a function of the form $f(s)= h(s)+h^{*}(2a-s)$, where $h(s)$ is a meromorphic function\footnote{\,\,\, For a meromorphic function $h(s)$, we define the function  $h^*(s)=\overline{h({\overline{s}})}$.}, and $a\in\R$. This result can be regarded as a generalization of the necessary condition of stability for the function $h(s)$, in the Hermite–Biehler theorem \cite[21, Part III, Lecture 27]{levin}. In our case, using an auxiliary function $W_F(s)$, we have the representation $C_F(s)=h(s)+h^*(-s)$, where $h(s)=W_F(-is-i/2)$. Some estimates of $h(s)$ due by Ki \cite[Theorem 2.1]{haseoki} play an important role to establish the necessary growth conditions in \cite[Theorem 36]{ov}. On the other hand, the strong relation between the zeros of $h(s)$ and $\psi_F$(s) (see \eqref{16_14_3_20}), implies to study the distribution of zeros of $\psi_F$(s), as a set of zeros of a Dirichlet polynomial.

\smallskip

Throughout the paper, we fix a sequence $F$. For a function $f(s)$ and the parameters $\sigma_1<\sigma_2$, and $T_1<T_2$, we denote the counting function
\[
N(\sigma_0,\sigma_1,T_1,T_2,f)=\#\{s\in\mathbb{C}: f(s)=0,\, \sigma_0<\sigma<\sigma_1, \hspace{0.1cm} T_1<\tau<T_2\},
\]
\[
\widehat{N}(\sigma_0,\sigma_1,T_1,T_2,f)=\#\{s\in\mathbb{C}: f(s)=0,\, \sigma_0\leq\sigma\leq\sigma_1, \hspace{0.1cm} T_1<\tau<T_2\},
\]
where, in both cases, the counts are with multiplicity, and
\[N_{0}^{'}(T,g)=\#\{s\in\mathbb{C}: g(s)=0, \hspace{0.1cm} \re{s}=0, \hspace{0.1cm} |\im{s}|<T\},
\]
where the count is without multiplicity.

\section{Preliminaries results}

In this section we collect several results for our proof. We highlight that in \cite[Proposition 2.3]{haseoki}, Ki showed that there is a constant $\beta_0>0$ such that $C_F(s)\neq 0$, for $|\re{s}|\geq \beta_0$. This implies that for $\beta\geq \beta_0$,
\begin{align} \label{18_58_3_20}
\overline{N}(T,C_F)=N(-\beta, \beta, -T ,T , C_F).
\end{align}
Therefore, we can restrict our analysis of the zeros in vertical strips. Now, let us start to find a new representation for $C_F(s)$. We define the entire function
\[
{W_F}(s)= \displaystyle\int_{-\infty}^{\infty}{{\tilde{\phi}}} _{F}(t)e^{ist}\, dt,
\]
where
\[
{\tilde{\phi}}_{F}(t)=\frac{e^{-2\pi\cosh{t}}}{\displaystyle{e^{t/2}}+\displaystyle{e^{-t/2}}}\Bigg(\displaystyle\sum_{m=0}^{n}a_me^{-2\pi me^{t}}\Bigg)\Bigg(\displaystyle\sum_{m=0}^{n}\overline{a_m}e^{-2\pi me^{-t}}\Bigg).
\] 
Then, we obtain the following relation
\begin{align}
C_F(s)=W_F\bigg(-is-\frac{i}{2}\bigg)+W_F\bigg(-is+\frac{i}{2}\bigg). \label{relacion}
\end{align}
If we denote by
\begin{align} \label{0_45}
h(s)=W_F\displaystyle\bigg(-is-\frac{i}{2}\bigg),
\end{align}
we rewrite \eqref{relacion} as
\begin{equation*}
C_F(s)=h(s)+h^*(-s). 
\end{equation*}
This representation allows us to use the following result (see \cite[Theorem 36]{ov}).
\begin{theorem} \label{teoprin} Let $\sigma_0>0$ be a parameter and $h(s)$ be an entire function such that $h(s)\neq 0$ for $\re{s}=\sigma_0$. We define the entire function 
	$$f(s)=h(s)+h^*(-s).$$ 
Suppose that the function
	\[
	F(s)=\frac{h^*(-s)}{h(s)}
	\]
	satisfies the following conditions.
\\
\noindent{\rm (i)} $F(s)\neq \pm 1$ on the line $\re{s}=\sigma_0$, and for some $\tau_0>0$ we have $|F(s)|<1$ for $s=\sigma_0+i\tau$ with $|\tau|\geq \tau_0$. 
\\
\noindent{\rm (ii)} There exist an increasing function $\varphi:\mathbb{R}\to\mathbb{R}$, a constant $K>0$ and sequences $\{T_m\}_{m\in\mathbb{N}}$, $\{T_m^{*}\}_{m\in\mathbb{N}}$ 

\,\,\,such that $\displaystyle\lim_{m\to\infty}T_m=\displaystyle\lim_{m\to\infty}T_m^{*}=\infty$,  
	\[
	T_m\leq T_{m+1} \leq \varphi(T_m), \hspace{0.3cm} 
	T_m^{*}\leq T_{m+1}^{*} \leq \varphi(T_m^{*}) \hspace{0.3cm} \text{for m} \in\mathbb{N},
	\]
	\,\,\,\,\,\,\,\,
	and $|F(s)|<e^{K|s|}$, for $s=\sigma+i\tau$ with  $0\leq\sigma\leq\sigma_0$ and $\tau=T_m$, $\tau=-T_m^{*}$, for $m\in\mathbb{N}$.\\
Then, for $T\geq 2$, we have that
\begin{equation} \label{llave}
	N(-\sigma_0, \sigma_0, -T, T, f)-N_{0}^{'}(T, f)\leq 4 \widehat{N}(0, \sigma_0, -\varphi(2T), \varphi(2T), h) +O(1),
	\end{equation} 
\end{theorem}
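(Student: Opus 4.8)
The plan is to prove \eqref{llave} by the argument principle, using the symmetry of $f$ to fold the count into the right half-strip and the three hypotheses to control the variation of $\arg f$ along the remaining sides of the contour. First I would record the two structural facts that drive everything. Since $(h^{*})^{*}=h$, one checks $f^{*}(-s)=h^{*}(-s)+h(s)=f(s)$, so the zeros of $f$ are symmetric about the line $\re{s}=0$ and, on that line, $f(i\tau)=h(i\tau)+\overline{h(i\tau)}=2\re{h(i\tau)}$ is real-valued. Writing $N_{+}$ for the number of zeros of $f$ (with multiplicity) in an open right half-box $0<\sigma<\sigma_0$, the symmetry gives $N(-\sigma_0,\sigma_0,-T,T,f)\le 2N_{+}+N_0$, where $N_0$ counts the axis zeros with multiplicity; it then suffices to bound $2N_{+}+(N_0-N_0')$.

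Next I would fix a convenient height. Hypothesis (ii), together with the spacing $T_{m+1}\le\varphi(T_m)$, lets me pick heights $T_m$ and $T_{m'}^{*}$ lying in $[2T,\varphi(2T)]$, i.e. heights at which the crude bound $|F(s)|<e^{K|s|}$ is available on the whole segment $0\le\sigma\le\sigma_0$. I then work on the box $R_{+}=[0,\sigma_0]\times[-T_{m'}^{*},T_m]$. Enlarging to these heights only increases the left-hand side, and since both heights are at most $\varphi(2T)$, every zero of $h$ that enters the estimate lies in the closed strip counted by $\widehat N(0,\sigma_0,-\varphi(2T),\varphi(2T),h)$; this is the origin of the height $\varphi(2T)$ on the right of \eqref{llave}.

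The core step is the argument principle for $f=h\,(1+F)$ on $\partial R_{+}$, indented by small semicircles into the box around each axis zero of $f$, so that $2\pi N_{+}=\Delta\arg f$ over the indented contour, and I would bound $\Delta\arg f$ side by side. On the vertical side $\sigma=\sigma_0$, hypothesis (i) gives $|F|<1$ for $|\tau|\ge\tau_0$, hence $1+F$ lies in the right half-plane there and contributes $O(1)$ to its argument variation, so $\Delta\arg f$ reduces to $\Delta\arg h$ up to $O(1)$, and the latter is controlled by the zeros of $h$ in a neighbouring region via a Jensen/Littlewood estimate. On the two horizontal sides $\tau=T_m,\,-T_{m'}^{*}$ I would use Backlund's device: the variation of $\arg(1+F)$ along a segment of fixed length $\sigma_0$ is bounded by the number of sign changes of $\re(1+F)$, which by Jensen's inequality applied to $1+F$ (equivalently to $h$ and $h^{*}(-\cdot)$) on a disk is $O(|s|)$; the bound $|F|<e^{K|s|}$ at the chosen height is exactly what makes this admissible, and the contribution is absorbed into the $h$-count and an $O(1)$ term. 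Finally, on the axis side $f$ is real, so between indentations $\arg f\in\{0,\pi\}$ and its net variation is governed by the sign changes of $\tau\mapsto\re{h(i\tau)}$, i.e. by the distinct axis zeros $N_0'$, while each indentation semicircle contributes $\pm\pi/2$ per unit of multiplicity; this is precisely the bookkeeping that produces $N_0-N_0'$ rather than $N_0$.

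Assembling the four estimates, $2\pi N_{+}$ is bounded by a constant multiple of $\widehat N(0,\sigma_0,-\varphi(2T),\varphi(2T),h)$ plus the axis term and $O(1)$; combined with the symmetry factor $2$ and the identification of the axis contribution with $N_0-N_0'$, this yields \eqref{llave}, the constant $4$ arising from the left–right symmetry together with the fact that on the non-axis sides $\arg f$ is measured against both the numerator and the denominator of $F=h^{*}(-s)/h(s)$, each controlled by the zeros of $h$ in the closed strip. The main obstacle I expect is making the horizontal-segment estimate genuinely quantitative—honestly converting $|F|<e^{K|s|}$ into a bound on the sign changes of $\re(1+F)$ through Jensen at a well-chosen height—and, in parallel, carrying out the axis bookkeeping carefully enough that exactly the distinct-zero count $N_0'$ is subtracted; matching these to the clean constant $4$ is routine once the two estimates are in place.
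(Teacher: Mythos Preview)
The paper does not actually prove this statement: Theorem~\ref{teoprin} is quoted verbatim from Vel\'asquez \cite[Theorem~36]{ov} (see the sentence ``This representation allows us to use the following result (see \cite[Theorem 36]{ov})'' immediately preceding it), and the paper then proceeds directly to verifying its hypotheses for the particular $h$ at hand. So there is no proof in the paper to compare your proposal against.

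That said, your sketch is along the right lines and is indeed in the spirit of the argument in \cite{ov}: exploit the symmetry $f^{*}(-s)=f(s)$ and the reality of $f$ on $\re s=0$, fold to a half-strip, write $f=h(1+F)$, and control $\Delta\arg f$ side by side. A couple of points are worth tightening. First, the reduction you write, $N(-\sigma_0,\sigma_0,-T,T,f)\le 2N_{+}+N_0$, is an equality (each off-axis zero in the strip has a mirror image), so the inequality you really need is $2N_{+}+N_0-N_0'\le 4\widehat N(0,\sigma_0,-\varphi(2T),\varphi(2T),h)+O(1)$; keep track of that. Second, your explanation of the constant $4$ is hand-wavy: it is not simply ``symmetry plus numerator and denominator''. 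In the actual argument the winding number of $F$ around $-1$ along $\partial R_{+}$ (rather than $\Delta\arg f$ directly) is what gets computed, and this equals the zero--pole count of $1+F$ inside $R_{+}$; poles of $1+F$ are exactly zeros of $h$, which is where $\widehat N(\cdots,h)$ enters, and the factor $4$ comes from bounding the axis and horizontal contributions each by twice the $h$-count (once for the zeros of $h(s)$ and once for those of $h^{*}(-s)$, both of which live in the closed strip $0\le\sigma\le\sigma_0$ after the reflection). Third, the ``Backlund device'' on the horizontal segments does not use the bound $|F|<e^{K|s|}$ to count sign changes of $\re(1+F)$ via Jensen on $1+F$; rather, one bounds $\Delta\arg h$ and $\Delta\arg h^{*}(-\cdot)$ separately along those segments, and it is the growth hypothesis on $F$ (equivalently, comparable growth of $|h^{*}(-s)|$ and $|h(s)|$ at the chosen heights) that keeps those two argument variations commensurate with the zero counts. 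If you revise the sketch with these three corrections, you will have the skeleton of the proof in \cite{ov}.
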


To prove that the function $h(s)$ defined in \eqref{0_45} satisfies the conditions of the previous theorem, we will use the estimates used by Ki. By \cite[Eq. (2.1)]{haseoki}, using the change of variable $s\mapsto -is-i/2$, we have that
\begin{align} \label{16_14_3_20}
h(s)=\Gamma(s-k)\Big(b_k\psi_{F,k}(s)+O\big(|s|^{-1/2}\big)\Big)
\end{align}
holds uniformly on the half-plane $\re{s}\geq-1/4$ and $|s|$ sufficiently large. On the other hand, by \cite[Theorem 2.1]{haseoki} it follows using the change of variable $s\mapsto -is+i/2$: for $\Delta>0$ sufficiently large,
		\begin{align} \label{16_14_3_21}
		\dfrac{h^*(-s)}{\Gamma(s-k-1)|\tau|^{\mu(\sigma)}}=O(1),
		\end{align}
for $s=\sigma+i\tau$ with $0\leq \sigma\leq \Delta$ and $|\tau|\geq 1$, and the function $\mu(\sigma)$ is given by
	\begin{equation*}
	\mu(\sigma) = \left\{
	\begin{array}{ll}
	1-\sigma, & \mathrm{si\ } 0\leq\sigma\leq1,
	\\
	0,             & \mathrm{si\ } \sigma>1.
	\end{array}
	\right.
	\end{equation*}
Finally, we will need to establish bounds for the right-hand side of \eqref{llave}, that implies to estimate the number of zeros of $h(s)$. The relation \eqref{16_14_3_20} tells us that we must study the behavior of the zeros of $\psi_F(s)$. We define $\psi_{F,k}(s):=\psi_F(s-k)$. Thus, using \eqref{17_36_2_20} this function we can be written as 
\begin{align}
\psi_{F,k}(s) = \displaystyle\sum_{m=0}^{n}a_me^{-\ln{((2m+1)\pi)(s-k)}}=e^{-\ln((2n+1)\pi)(s-k)}\Bigg[\displaystyle\sum_{m=0}^{n}p_me^{\beta_ms}\Bigg], \label{poly}
\end{align}
where $p_m=(a_{n-m})e^{-\beta_mk}$ and $\beta_m=\ln((2n+1)/(2(n-m)+1))$, for $0\leq m\leq n$. The sum on the right-hand side of \eqref{poly} is a Dirichlet polynomial in the framework \cite[Subsection 12.5]{bellman}. 

\begin{proposition} \label{19_4_3_20} Let $Z(\psi_{F,k})$ denote the set of zeros of $\psi_{F,k}(s)$. 
\begin{enumerate}
	\item There is a positive real number $c_0$ such that $Z(\psi_{F,k})\subset \{s\in\mathbb{C}: |\re{s}|<c_0\}$. 
	\item For $T_1<T_2$ and $c\geq c_0$, we have that 
	\[
	N(-c, c, T_1, T_2, \psi_{F,k})\leq n+\dfrac{\ln(2n+1)}{2\pi}(T_2-T_1).
	\]
	\item Let $K\subset\C$ such that $|\re{s}|\leq M$ for $s\in K$, and some $M>0$. Suppose that  \textit{K} is uniformly bounded from the zeros of $\psi_{F,k}(s)$, i.e.  
	$$\inf\{|s-z|:s\in \textit{K}, z\in Z(\psi_{F,k})\}>0.$$ Then, \,$\inf\{|\psi_{F,k}(s)|: s\in \textit{K}\}>0$.
\end{enumerate}	
\end{proposition}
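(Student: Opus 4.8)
The plan is to treat the three parts in turn, the first two by applying the argument principle to the exponential sum appearing in \eqref{poly}, and the third by an argument via normal families and Hurwitz's theorem. Write $Q(s)=\sum_{m=0}^n p_m e^{\beta_m s}$ for the bracketed factor, so that $\psi_{F,k}(s)=e^{-\ln((2n+1)\pi)(s-k)}Q(s)$, where the $n+1$ real frequencies satisfy $0=\beta_0<\beta_1<\dots<\beta_n=\ln(2n+1)$; we may assume $a_0 a_n\neq 0$ (equivalently $p_0 p_n\neq 0$), since otherwise one discards the vanishing extreme coefficients and the bounds below only improve. For \textbf{part (1)}, factor out the extreme terms: for $\re s$ large one has $Q(s)=p_n e^{\beta_n s}\bigl(1+\sum_{m<n}(p_m/p_n)e^{(\beta_m-\beta_n)s}\bigr)$, and since $\beta_m-\beta_n<0$ the parenthesis tends to $1$, so $Q(s)\neq 0$; factoring out $p_0$ instead shows $Q(s)\neq 0$ for $\re s$ sufficiently negative. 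Hence $Z(\psi_{F,k})=Z(Q)$ lies in a bounded vertical strip, which is (1) for a suitable $c_0>0$.

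For \textbf{part (2)}, every zero of $\psi_{F,k}$ already has $|\re s|<c_0$, so $N(-c,c,T_1,T_2,\psi_{F,k})$ is just the number of zeros in the strip $T_1<\im s<T_2$ for any $c\geq c_0$; we may therefore run the argument principle on the rectangle $R$ with vertical sides $\re s=\pm C$ for $C$ as large as we please and horizontal sides $\im s=T_1,T_2$, after a harmless perturbation of the heights so that $\partial R$ carries no zeros (the bounds below are uniform in the heights). For $C$ large, $\psi_{F,k}(\pm C+i\tau)$ is dominated by its extreme term uniformly in $\tau$: on $\re s=C$ the term $a_0\pi^{k-s}$ dominates, so $\Delta\arg\psi_{F,k}=-\ln\pi\,(T_2-T_1)+O(1)$ along that side; on $\re s=-C$ the term $a_n((2n+1)\pi)^{k-s}$ dominates, giving $\Delta\arg\psi_{F,k}=\ln((2n+1)\pi)(T_2-T_1)+O(1)$. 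On a horizontal side $\im s=T$ the prefactor $e^{-\ln((2n+1)\pi)(s-k)}$ has constant argument, so $\Delta\arg\psi_{F,k}=\Delta\arg Q$; and $Q(\sigma+iT)=\sum_{m=0}^n(p_m e^{i\beta_m T})e^{\beta_m\sigma}$, whose real part after any fixed unit rotation is a real linear combination $\sum_m d_m e^{\beta_m\sigma}$ of $n+1$ distinct real exponentials of $\sigma$, hence vanishes at most $n$ times; thus $\arg\psi_{F,k}$ varies by at most $\pi(n+1)$ along each horizontal side. Combining the four contributions and dividing by $2\pi$ gives
\[
N(-c,c,T_1,T_2,\psi_{F,k})\leq\frac{\ln(2n+1)}{2\pi}(T_2-T_1)+O(1),
\]
and the remaining task is to combine the bounded vertical errors with the horizontal contribution and sharpen the overall constant to exactly $n$. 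I expect this last bookkeeping — which is where the explicit constant $n$ is produced, resting on the ``at most $n$ real zeros'' property of real exponential sums together with the fact that the argument is pinned near the ends of the horizontal sides — to be the main technical obstacle.

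For \textbf{part (3)}, suppose for contradiction that there are points $s_j=\sigma_j+i\tau_j\in K$ with $\psi_{F,k}(s_j)\to 0$; since $|\sigma_j|\leq M$ we pass to a subsequence along which $\sigma_j\to\sigma_\infty\in[-M,M]$. If $(\tau_j)$ has a bounded subsequence, then $(s_j)$ has a limit point $s_\infty$ in the closure of $K$, which still has positive distance from $Z(\psi_{F,k})$; but $\psi_{F,k}(s_\infty)=0$ by continuity, a contradiction. Otherwise $\tau_j\to\pm\infty$, and we consider the translates
\[
g_j(z):=\psi_{F,k}(z+i\tau_j)=\sum_{m=0}^n\bigl(a_m\,e^{-i\tau_j\ln((2m+1)\pi)}\bigr)((2m+1)\pi)^{k-z}.
\]
The phase vector $\bigl(e^{-i\tau_j\ln((2m+1)\pi)}\bigr)_{m=0}^n$ lies in the compact torus $\{(w_0,\dots,w_n)\in\C^{n+1}:|w_m|=1\text{ for all }m\}$, so along a further subsequence it converges to some $(\omega_m)$ with $|\omega_m|=1$, whence $g_j\to G(z):=\sum_{m=0}^n a_m\omega_m((2m+1)\pi)^{k-z}$ locally uniformly on $\C$. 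Now $G\not\equiv 0$, being a nontrivial exponential sum with distinct real frequencies and at least one nonzero coefficient, while $G(\sigma_\infty)=\lim_j g_j(\sigma_j)=\lim_j\psi_{F,k}(s_j)=0$. By Hurwitz's theorem $g_j$ has a zero in any prescribed small disk about $\sigma_\infty$ for all large $j$; translating it back, $\psi_{F,k}$ has a zero at distance $o(1)$ from $s_j\in K$, contradicting $\inf\{|s-z|:s\in K,\ z\in Z(\psi_{F,k})\}>0$. This establishes (3).
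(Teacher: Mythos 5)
The paper itself gives no argument for this proposition: its ``proof'' is a citation to Theorems 12.4, 12.5 and 12.6 of Bellman--Cooke, so any self-contained treatment is necessarily a different route. Your parts (1) and (3) are complete and correct. The domination of the extreme exponential terms settles (1), and your proof of (3) --- translating by $i\tau_j$, extracting a convergent phase vector on the torus, passing to a locally uniform limit $G\not\equiv 0$, and applying Hurwitz --- is a clean, self-contained substitute for the almost-periodicity machinery that Bellman--Cooke (and Levin) use for such statements; that part would stand on its own.

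The genuine gap is in part (2), and you flag it yourself. Your contour computation, letting $C\to\infty$ so that the vertical-side errors disappear, yields at most $\ln(2n+1)\,(T_2-T_1)$ of total argument variation from the two vertical sides plus at most $(n+1)\pi$ from each horizontal side, hence $N(-c,c,T_1,T_2,\psi_{F,k})\le \frac{\ln(2n+1)}{2\pi}(T_2-T_1)+n+1$, whereas the proposition asserts the additive constant $n$. The ``bookkeeping'' you defer is not bookkeeping: it is precisely the content of the sharper zero-counting theorem being cited (one must show the two horizontal sides cannot both contribute the full $(n+1)\pi$, e.g.\ by exploiting that near the ends of each horizontal segment a single exponential dominates and pins the argument, or by following Bellman--Cooke's own proof), and your write-up neither carries this out nor proves the ``at most $n$ real zeros'' Descartes-type lemma it leans on. As written you have proved the statement only with $n$ replaced by $n+1$ (or an unspecified $O(1)$). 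That weaker constant would in fact suffice for the way the proposition is used later in the paper, at the cost of pigeonholing into $2n+2$ subrectangles and slightly worse constants in Theorem \ref{16_17_3_20}, but it is not the inequality claimed; to prove the proposition as stated you must either complete the refinement or, as the paper does, quote Bellman--Cooke directly.
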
	 
\begin{proof} See \cite[Theorems 12.4, 12.5 and 12.6]{bellman}.
\end{proof}



\medskip

\section{Proofs of Theorem \ref{16_17_3_20} and Theorem \ref{teo1}}

\subsection{Proof of Theorem \ref{16_17_3_20}}

Let us define the function
\begin{align} \label{20_5_4_20}
F(s)= \frac{h^*(-s)}{h(s)}.
\end{align}
Since that $h(s)$ and $h^*(-s)$ are entire functions, we can choose $\sigma_0>0$ sufficiently large such that $F(s)\neq \pm 1$ and $h(s)\neq 0$ on the line $\re{s}=\sigma_0$. Using \eqref{16_14_3_20} and \eqref{16_14_3_21} we get for $s=\sigma+i\tau$ with $0\leq\sigma\leq\sigma_0$ and $|\tau|$ sufficiently large,  
	\begin{align}
	F(s) = \frac{O(1)\Gamma(s-k-1)|\tau|^{\mu(\sigma)}}{\Gamma(s-k)\big(b_k\psi_{F,k}(s)+O\big(|s|^{-1/2}\big)\big)}
	=\frac{O(1)|\tau|^{\mu(\sigma)}}{(s-k-1)\big(b_k\psi_{F,k}(s)+O\big(|s|^{-1/2}\big)\big)}. \label{7:32}
	\end{align}
Now, we analyze the behavior of $F(s)$ on the line $\re{s}=\sigma_0$. Note that $\mu(\sigma_0)=0$. On another hand, the line $\re{s}=\sigma_0$ is uniformly bounded from the zeros of $\psi_{F,k}(s)$. Then, recalling that $b_k\neq0$, by Proposition \ref{19_4_3_20} and the triangle inequality we get
\begin{align} \big|b_k\psi_{F,k}(s)+O\big(|s|^{-1/2}\big)\big|\gg 1, \label{triangulo}
\end{align}
for $s=\sigma_0+i\tau$, with $|\tau|$ sufficiently large,
Inserting this in \eqref{7:32}, it follows
	\begin{align*}
	|F(s)|\ll \frac{1}{|s-k-1|}. \label{cotafinal}
	\end{align*}
Therefore, for $s=\sigma_0+i\tau$ with $|\tau|$ sufficiently large we conclude that $|F(s)|<1$. This implies (i) of Theorem \ref{teoprin}. Let us to prove (ii) of Theorem \ref{teoprin}. For each $m\in\mathbb{Z}$ we consider the rectangle
	\[
	R_{m}=\{s\in\mathbb{C}:-\sigma_0< \re{s}< \sigma_0,\hspace{0.1cm} m< \im{s}< m+1\}.
	\]
	We divide this rectangle into $2n+1$ subrectangles $R_{m,j}$ defined by
	\[
	R_{m,j}=\Big\{s\in\mathbb{C}:-\sigma_0< \re{s}< \sigma_0, \hspace{0.1cm} m+\dfrac{j-1}{2n+1}< \im{s}< m+\dfrac{j}{2n+1}\Big\},
	\]
	for $j\in\{1,2,...,2n+1\}$. By Proposition \ref{19_4_3_20} we have that $N(-\sigma_0, \sigma_0, m, m+1, \psi_{F,k})\leq 2n$. So, there exists $j_0$ such that $\psi_{F,k}(s)$ does not vanish in $R_{m,j_0}$. Let us write
\[
	T_m=m+\dfrac{j_0-\frac{1}{2}}{2n+1}.
\]
	Note that $m<T_m<m+1$. Then, if we define $\varphi(x)=x+2$, we have that
	\begin{align*} 
	m<T_m<m+1<T_{m+1}<m+2<T_m+2=\varphi(T_m).
	\end{align*}
	Let $\textit{K}=\{s\in\mathbb{C}: -\sigma_0<\re{s}<\sigma_0, \hspace{0.1cm}  \im{s}=T_m, m\in\mathbb{Z}\}$. 
For any $s\in\textit{K}$, we have that $|s-z|\geq 1/{2(2n+1)}$, for all $z\in Z(\psi_{F,k})$. Then $\textit{K}$ is uniformly bounded from the zeros of $\psi_{F,k}(s)$. Using Proposition \ref{19_4_3_20} we see that \eqref{triangulo} holds for $s\in K$ with $|m|$ sufficiently large. Therefore, in \eqref{7:32} we obtain that for $s=\sigma+i\tau$ with $0\leq \sigma\leq \sigma_0$ and $\tau=T_m$ ($|m|$ sufficiently large) it follows
\[
F(s) \ll  \frac{|\tau|^{\mu(\sigma)}}{|s-k-1|}.
\]
Using the fact that $\mu(\sigma)\leq 1$, we conclude that 
		\begin{align*}
			|F(s)|\ll 1<e^{|s|}.
		\end{align*}
Now, we choose $T_m^{*}=-T_{-m}$, for all $m\in\mathbb{N}$. Thus, we obtain (ii) of Theorem \ref{teoprin}. Therefore
	\begin{equation} \label{compa}
N(-\sigma_0, \sigma_0, -T, T, C_F)-N_{0}^{'}(T, C_F)\leq 4 \widehat{N}(0, \sigma_0, -\varphi(2T), \varphi(2T), h) +O(1).
\end{equation}
To conclude we need to bound $\widehat{N}(0, \sigma_0, -\varphi(2T), \varphi(2T), h)$. Firstly, we choose $0<\varepsilon<1/4$ such that $h(s)$ and $\psi_{F,k}(s)$ do not vanish on $\re{s}=-\varepsilon_0$. The definition of $T_m$ implies that
\begin{align} \label{19_44_4_02}
\dfrac{1}{2n+1}\leq T_{m+1}-T_m\leq 2,
\end{align}
and using Proposition \ref{19_4_3_20} we obtain $N(-\varepsilon,\sigma_0,T_m,T_{m+1},\psi_{F,k})\leq 2n$. Let us to divide the rectangle $\{s\in\C: -\varepsilon< \re{s}< 0 \hspace{0.1cm} \mbox{and} \hspace{0.1cm} T_m < \im{s} <T_{m+1}\}$ into $2n+1$ vertical subrectangles with horizontal length $\varepsilon/(2n+1)$. So, one of this rectangles, denoted by $I_m$, has no zeros of $\psi_{F,k}(s)$ and $h(s)$. Suppose that the right vertical side of $I_m$ is contained on the line $\re{s}=-\varepsilon_{m}$, that we can suppose without loss of generality that doesn't contain a zero of $\psi_{F,k}(s)$. Now, if we place a circle of radius $\delta>0$ sufficiently small (for instance $\delta<1/(2n+1)(16n)$) we can enclosed the zeros of the rectangle $J_m=\{s\in\C: -\varepsilon_m< \re{s}< \sigma_0 \hspace{0.1cm} \mbox{and} \hspace{0.1cm} T_m< \im{s}< T_{m+1}\}$ in a contour $C_m$ such that the distance between $C_m$ and $J_m$ is at least $1/(2n+1)(16n)$ and $C_m$ is distanced at least $1/(2n+1)(32n)$ from the zeros of $\psi_{F,k}(s)$. Therefore, the union of the  contour $C_m$ for all $m\in\Z$ is uniformly bounded from the zeros. By Proposition \ref{19_4_3_20} there is a constant $M>0$ such that $|\psi_{F,k}(s)|>M/|b_k|$ for each $s\in C_m$. Using \eqref{16_14_3_20} we get that 
$$
\bigg|b_k\psi_{F,k}(s)-\dfrac{h(s)}{\Gamma(s-k)}\bigg|<M<|b_k\psi_{F,k}(s)|
$$
for $s\in C_m$, with $|m|$ sufficiently large. If we denote $w(s)=h(s)/\Gamma(s-k)$, applying Rouch\'e's theorem we obtain that there is $m_0\in\mathbb{N}$ sufficiently large such that
\begin{align} N(-\varepsilon_m, \sigma_0,T_m,T_{m+1},w)=N(-\varepsilon_m,\sigma_0,T_m,T_{m+1},\psi_{F,k}),
\label{hola1} 
\end{align}
and
\begin{align}
N(-\varepsilon_{-m-1}, \sigma_0, T_{-m-1},T_{-m},w)=N(-\varepsilon_{-m-1},\sigma_0,T_{-m-1},T_{-m},\psi_{F,k}) \label{hola2} 
\end{align}
for $m\geq m_0$. On another hand, by analyticity of $h(s)$ we have 
\begin{align} \label{25}
N(-1/4,\sigma_0,-T_{m_0}-1,T_{m_0}+1,h)=O(1).
\end{align}
Finally, let $T$ be a positive real parameter. If $T< T_{m_0}$, by \eqref{25} we obtain $N(0,\sigma_0,0,T,h)=O(1)$. If $T\geq T_{m_0}$, we choose $m_1\geq m_0\geq 1$ such that $m_1<T_{m_1}\leq T<T_{m_1+1}<m_1+2$. Since that the zeros of $1/\Gamma(s)$ are the non-positive integers, by \eqref{hola1} , \eqref{25}, Proposition \ref{19_4_3_20} and \eqref{19_44_4_02}, we get
\begin{align*}
\widehat{N}(0,\sigma_0,0,T,h)& \leq \displaystyle\sum_{j=m_0}^{m_1}N(-\varepsilon_j,\sigma_0,T_{j},T_{j+1},h) + \widehat{N}(0,\sigma_0,0,T_{m_0}+1,h) \nonumber \\
& = \displaystyle\sum_{j=m_0}^{m_1}N(-\varepsilon_j,\sigma_0,T_{j},T_{j+1},w) +O(1)=  \displaystyle\sum_{j=m_0}^{m_1}N(-\varepsilon_0,\sigma_0,T_{j},T_{j+1},\psi_{F,k})+O(1) \nonumber \\
& \leq \displaystyle\sum_{j=m_0}^{m_1}\bigg(n+\dfrac{\ln(2n+1)}{2\pi}(T_{j+1}-T_{j})\bigg)+O(1)  \leq  \bigg(n+\dfrac{\ln(2n+1)}{\pi}\bigg)T +O(1). 
\end{align*}
Similarly,  for $T<0$ we use \eqref{hola2} to obtain a similar bound. Thus, we obtain for $T>0$ that  
\[
\widehat{N}(0,\sigma_0,-T,T,h) \leq \bigg(2n+\dfrac{2\ln(2n+1)}{\pi}\bigg)T +O(1).
\]
We replace $T$ by $\varphi(2T)$ in the above expression, and inserting in \eqref{compa}, and one can see that
\begin{align} \label{17_29_5_20}
N(-\sigma_0, \sigma_0, -T, T, C_F)-N_{0}^{'}(T, C_F)\leq \bigg(16n+\dfrac{16\ln(2n+1)}{\pi}\bigg)T +O(1).
\end{align}
To obtain our desired result we will use an argument of Ki in \cite[Pag. 131]{haseoki}. Following his idea, for $T>0$ we get that
\begin{align} \label{17_29_5_21}
N(-\sigma_0, \sigma_0, -T, T, C_F) - \overline{N_{1}}(T, C_F) \leq 2\Bigg(N(-\sigma_0, \sigma_0, 0, T, C_F) - \displaystyle\sum_{k=1}^\infty \overline{N_{k}}(T, C_F)\Bigg),
\end{align}
where $\overline{N_k}(T,C_F)$ denotes the number of zeros of $C_F$ with multiplicity $k$ with $|\im{s}|< T$ and $\re{s}=0$, counting with multiplicity. Note that 
\begin{align} \label{17_29_5_23}
N_{0}^{'}(T, C_F) \leq \displaystyle\sum_{k=1}^\infty \overline{N_{k}}(T, C_F).
\end{align}
We conclude combining \eqref{17_29_5_20}, \eqref{17_29_5_21}, \eqref{17_29_5_23}, and recalling by \eqref{18_58_3_20} that $
\overline{N}(T,C_F)=N(-\sigma_0, \sigma_0, -T ,T , C_F)$.

\smallskip
\subsection{Proof of Theorem \ref{teo1}} The proof is similar as the previous case. Using the function defined in \eqref{20_5_4_20}, without loss of generality we can choose $\delta>0$ in such a way that $F(s)\neq \pm1$, $h(s)\neq 0$ and $C_F(s)\neq 0$ when $\sigma=\delta$. By \eqref{16_14_3_20} and \eqref{16_14_3_21} it follows for $s=\sigma+i\tau$ with $0\leq\sigma\leq\delta$ and $|\tau|$ sufficiently large
\begin{align*}
F(s) =\frac{O(1)|\tau|^{\mu(\sigma)}}{(s-k-1)\big(b_k\psi_{F,k}(s)+O\big(|s|^{-1/2}\big)\big)}.
\end{align*}
Using the fact that the $\psi_{F,k}(s)$ has no zeros in the strip $-\Delta_{**}<\re{s}<\Delta_{*}$, by Proposition \ref{19_4_3_20} we get
\begin{align} \label{17_47_5_20}
\Big|b_k\psi(s-k)+O\big(|s|^{-1/2}\big)\Big|\gg 1,
\end{align}
for $s=\sigma+i\tau$, with $0\leq\sigma\leq\delta$ and $|\tau|$ sufficiently large. Therefore
\begin{align}
|F(s)|\ll \frac{|\tau|^{\mu(\sigma)}}{|s-k-1|}. \label{cotafinal2}
\end{align}
Using the fact that $\mu(\delta)<1$, then
\[
|F(s)|\ll \frac{|\tau|^{\mu(\delta)}}{|s-k-1|}\ll \frac{1}{|\tau|^{1-\mu(\delta)}}<1,
\]
for $s=\delta+i\tau$, with $|\tau|$ sufficiently large. Further, we have that $\mu(\sigma)\leq1$, which implies in \eqref{cotafinal2} that
\begin{align*}
		|F(s)|\ll \frac{|\tau|^{\mu(\sigma)}}{|s-k-1|}\ll 1<e^{|s|},
\end{align*}
for $s=\sigma+i\tau$ with $0\leq\sigma\leq\delta$ and $|\tau|$ sufficiently large. Choosing $\varphi(x)=x+2$ and $T_m=T^*_{m}=m$, for $m$ sufficiently large, we get that the hypotheses in Theorem \ref{teoprin} are satisfied. Then
\begin{align} \label{19_20_5_20}
N(-\delta, \delta, -T, T, C_F)-N_{0}^{'}(T, C_F)\leq 4 \widehat{N}(0, \delta, -\varphi(2T), \varphi(2T), h) +O(1).
\end{align}
Combining \eqref{16_14_3_20} and \eqref{17_47_5_20}, we get a constant $L>0$ such that $
|h(s)|\geq L|\Gamma(s-k)|
$ for $s=\sigma+i\tau$ with $0\leq\sigma\leq\delta$ and $|\tau|$ sufficiently large. Then, $h(s)$ only has finitely many zeros on the strip $0\leq\sigma\leq\delta$, because all possible zeros are contained in a compact set. Therefore, the right-hand side in \eqref{19_20_5_20} is bounded and this implies our desired result.

\section*{Acknowledgements}
	A. C. was supported by Grant 227768 of the Research Council of Norway. Part of the project was completed during my stay at IMCA with excellent working conditions. 
	
\medskip

\end{document}